\documentclass[11pt]{amsart}

\title[Haken spheres for genus two Heegaard splittings]
{Haken spheres for genus two Heegaard splittings}

\author{Sangbum Cho}
\thanks{The first-named author is supported by Basic Science Research Program through the National Research Foundation of Korea (NRF) funded by the Ministry of Science, ICT and Future Planning (NRF-2015R1A1A1A05001071).}
\address{Department of Mathematics Education  \newline
\indent Hanyang University, Seoul 133-791, Korea}
\email{scho@hanyang.ac.kr}

\author{Yuya Koda}
\thanks{The second-named author is supported in part
by the Grant-in-Aid for Young Scientists (B), JSPS KAKENHI Grant Number 26800028.}

\address{
Department of Mathematics \newline
\indent Hiroshima University, 1-3-1 Kagamiyama, Higashi-Hiroshima, 739-8526, Japan}
\email{ykoda@hiroshima-u.ac.jp}

\subjclass[2000]{Primary 57N10; 57M60.}

\date{\today}

%\date{  }

\usepackage{amsfonts,amsmath,amssymb,amscd}

\usepackage{amsthm}
\usepackage{latexsym}
\usepackage[dvips]{graphicx}
\usepackage[dvips]{psfrag}
\usepackage[dvips]{color}
\usepackage{xypic}
\usepackage[abs]{overpic}
\usepackage{caption}
\usepackage[all]{xy}

\theoremstyle{plain}
\newtheorem*{theorem*}{Theorem}
\newtheorem*{lemma*} {Lemma}
\newtheorem*{corollary*} {Corollary}
\newtheorem*{proposition*}{Proposition}
\newtheorem*{conjecture*}{Conjecture}
\newtheorem{theorem}{Theorem}[section]
\newtheorem{lemma}[theorem]{Lemma}

\theoremstyle{remark}

\newtheorem*{remark}{Remark}

\theoremstyle{definition}

\textwidth=5.8in
\voffset=0.25in
\oddsidemargin.25in
\evensidemargin.25in
\marginparwidth=.85in

\newcommand{\Nbd}{\operatorname{Nbd}}

\makeatletter

\makeatother

%\textsc{}

%\date{}

\begin{document}
\maketitle

\begin{abstract}
A manifold which admits a reducible genus-$2$ Heegaard splitting is one of the $3$-sphere, $S^2 \times S^1$, lens spaces or their connected sums.
For each of those splittings, the complex of Haken spheres is defined.
When the manifold is the $3$-sphere, $S^2 \times S^1$ or the connected sum whose summands are lens spaces or $S^2 \times S^1$, the combinatorial structure of the complex has been studied by several authors. In particular, it was shown that those complexes are all contractible.
In this work, we study the remaining cases, that is, when the manifolds are lens spaces.
We give a precise description of each of the complexes for the genus-$2$ Heegaard splittings of lens spaces.
A remarkable fact is that the complexes for most lens spaces are not contractible and even not connected.
\end{abstract}

%\vspace{1em}

%\begin{small}
%\hspace{2em}  \textbf{2010 Mathematics Subject Classification}:
%57M15, 05C10; 57M25, 05C25, 57S05

%57M15 Relations with graph theory
%05C10  Planar graphs; geometric and topological aspects of graph theory [See also
%57M25 Knots and links in $S^3$
%05C25 Graphs and abstract algebra (groups, rings, fields, etc.)
%57S05 Topological properties of groups of homeomorphisms or diffeomorphisms

%\hspace{2em}
%\textbf{Keywords}:
%topological symmetry group; mapping class group; spatial graph; group of
%
%\hspace{2em} homeomorphisms; tunnel number one knot; tunnel.
%\end{small}

\section*{Introduction}

Let $(V, W; \Sigma)$ be a genus-$2$ Heegaard splitting of a closed orientable 3-manifold $M$.
That is, $M = V \cup W$ and $ V \cap W = \partial V =\partial W =\Sigma$, where $V$ and $W$ are genus-$2$ handlebodies.
A {\it Haken sphere} $S$ for the splitting $(V, W; \Sigma)$ is a separating sphere in $M$ that intersects $\Sigma$ transversely in a single essential circle.
The circle $S \cap \Sigma$ is necessarily separating in $\Sigma$.
Two Haken spheres $S$ and $T$ are said to be {\it equivalent} if $S \cap \Sigma$ is isotopic to $T \cap \Sigma$ in $\Sigma$. 
We note that the study of Haken spheres of $(V, W; \Sigma)$ corresponds to 
the study of tunnel number-1 links in $M$. 

Given two Haken spheres $S$ and $T$ for the splitting $(V, W; \Sigma)$, the {\it intersection number} $S \cdot T$ is defined to be the minimal number of points of $S \cap T \cap \Sigma$ up to isotopy in $\Sigma$.
Then $S$ and $T$ are equivalent if and only if $S \cdot T =0$, and otherwise we have $S \cdot T \geq 4$ by Scharlemann-Thompson \cite{ST03}.
When $S \cdot T = 4$, we say that $S$ and $T$ are joined by a {\it $4$-gon replacement} or by an {\it elementary move}.
The notion of $4$-gon replacement proposes a natural simplicial complex called the {\it complex of Haken spheres}, or simply the {\it sphere complex}, for the splitting $(V, W; \Sigma)$.
The sphere complex is defined as follows.
The vertices of the sphere complex are the equivalence classes of Haken spheres, and a collection of distinct $k + 1$ vertices $S_0, S_1, \ldots , S_k$ spans a $k$-simplex if and only if $S_i \cdot S_j = 4$ for all $0 \leqslant i < j \leqslant k$.

If a genus-$2$ Heegaard splitting of a manifold admits a Haken sphere, then the manifold is one of the $3$-sphere, $S^2 \times S^1$, lens spaces and their connected sums.
We note that the genus-$2$ Heegaard splittings of these manifolds are
completely classified
up to isotopy by Waldhausen \cite{Wal68}, Bonahon-Otal \cite{BO83} and \cite{CK13}.
In fact, among them, each prime manifold admits a unique genus-$2$ Heegaard splitting,
while each non-prime one admits at most two genus-$2$ Heegaard splittings.
It is natural to ask whether the structures, such as dimension, connectivity and contractibility,
of the sphere complex of each genus-$2$ Heegaard splitting
reflect the topological structure of the corresponding $3$-manifold. 
In \cite{Sch04}, Scharlemann showed that the sphere complex for the genus-$2$ Heegaard splitting of the $3$-sphere is connected, and subsequently a complete description of its shape was given in Akbas \cite{Akb08} and Cho \cite{Cho08}. 
In fact, this sphere complex is $2$-dimensional contractible complex and each edge is contained in a single $2$-simplex.
In Lei \cite{Lei05} and Lei-Zhang \cite{LZ04}, it was shown that the sphere complex is connected for a genus-$2$ Heegaard splitting of the connected sum whose summands are lens spaces or $S^2 \times S^1$.
In \cite{CK13} later, this result is refined that they are all contractible. In fact, the sphere complex is a tree if the summands are both lens spaces or both $S^2 \times S^1$, and is a contractible $3$-dimensional complex if one summand is $S^2 \times S^1$ and
the other one a lens space.
In \cite{CK14}, it is shown that the sphere complex is a $3$-dimensional contractible complex when the maninfold is $S^2 \times S^1$.

In this paper, we investigate the sphere complexes for the remaining cases, that is, for the genus-$2$ Heegaard splittings of lens spaces, and give a description of each of those complexes.
We state the main result as follows.

\begin{theorem}
\label{thm:main theorem}
Let $(V, W; \Sigma)$ be the genus-$2$ Heegaard splitting of a lens space $L = L(p, q)$ with $1 \leq q \leq p/2$, and let $\mathcal S(V, W; \Sigma)$ be the sphere complex for $(V, W; \Sigma)$.
\begin{enumerate}
  \item $\mathcal S(V, W; \Sigma)$ contains no $3$-cycle, that is, it is a $1$-dimensional complex. Each vertex of $\mathcal S(V, W; \Sigma)$ has infinite valency.
  \item $\mathcal S(V, W; \Sigma)$ is connected if and only if $L$ is $L(p, 1)$.
  \item If $L$ is $L(2, 1)$, then every edge in $\mathcal S(V, W; \Sigma)$ is contained in a unique cycle, which is a $4$-cycle.
  \item If $L$ is $L(3, 1)$, then every edge in $\mathcal S(V, W; \Sigma)$ is contained in a unique cycle, which is a $6$-cycle.
  \item If $L$ is $L(p, 1)$ with $p \geq 4$, then $\mathcal S(V, W; \Sigma)$ is a tree.
  \item If $L$ is not $L(p, 1)$, then $\mathcal S(V, W; \Sigma)$ is not connected, and it consists of infinitely many tree components.
\end{enumerate}
\end{theorem}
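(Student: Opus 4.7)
The plan is to parameterize the Haken spheres of $(V,W;\Sigma)$ via primitive disk data, compute the intersection number in those parameters, and then read each clause off the resulting combinatorial model.

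Since $L$ is prime, every Haken sphere $S$ bounds a $3$-ball $B_S$ on one side; the splitting restricts to the trivial splitting of $B_S$ and to a genus-$1$ splitting of the punctured lens space on the other side. From this structure I would deduce that the separating disk $S \cap V$ cuts $V$ into solid tori $V_1, V_2$ with $V_1 \subset B_S$, and that the meridian $E$ of $V_1$ is a \emph{primitive} disk of $V$, meaning $\partial E$ meets the boundary of some non-separating disk in $W$ in a single point. Invoking the tree-like structure of the primitive disk complex of $(V,W;\Sigma)$ established in our earlier work, I would then set up a bijection between equivalence classes of Haken spheres and an explicit parameter set, indexed by an integer slope together with a discrete invariant in $H_1(L;\Integer) \cong \Integer/p\Integer$ recording the homology class of the core of $V_1$.

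Next I would translate the $4$-gon replacement condition into these parameters. Tracking arcs across the primitive disks shows that $S \cdot T = 4$ is equivalent to an elementary move in which the slope shifts by a unit and the discrete invariant is transformed by multiplication by $\pm q$ modulo $p$; infinite valency at each vertex follows since infinitely many such shifts are admissible. For clause (1), a $3$-simplex would yield three separating curves on $\Sigma$ pairwise meeting in $4$ points; in the lens space setting, a cut-and-paste analysis extracts from such a triple a non-separating disk system shared by $V$ and $W$, which is ruled out by the primality of $L$. For clauses (2) and (6), iterating the edge move multiplies the discrete invariant by powers of $\pm q$, so the orbit of any attainable invariant is its coset under the cyclic subgroup $\langle \pm q \rangle \le (\Integer/p\Integer)^{\times}$; the complex is therefore connected if and only if this subgroup acts transitively on the attainable invariants, which happens precisely when $q \equiv \pm 1 \pmod p$, equivalently when $L = L(p,1)$. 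In the remaining cases one obtains infinitely many components, each of which is a tree by the argument given below.

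For clause (5), in the connected case $L(p,1)$ with $p \geq 4$ the integer slope parameter is strictly monotone along any non-backtracking path, ruling out closed loops and yielding a tree; the same monotonicity applies to each component in the disconnected case. For clauses (3) and (4), when $p \in \{2,3\}$ extra identifications coming from the hyperelliptic involution (and from $L(3,1) \cong L(3,2)$ when $p=3$) close each chain into a unique $4$-cycle or $6$-cycle through every edge. The main technical difficulty is the first step: producing a clean parameterization of Haken spheres and verifying that distinct parameters correspond to non-isotopic spheres. This requires delicate control over the isotopy class of $S \cap \Sigma$, which we expect to obtain using the Goeritz group action on the primitive disk complex; once this is in place, the intersection-number formula should follow by a careful but direct tracking of arcs across the primitive disks.
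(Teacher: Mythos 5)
Your overall philosophy --- parameterize Haken spheres by primitive-disk data and translate the $4$-gon replacement into that language --- is the right one, and your observation that $S\cap V$ cuts off a solid torus whose meridian is a primitive disk is correct; this is exactly how the paper sets up the correspondence between equivalence classes of Haken spheres and isotopy classes of dual pairs $(D,D')$ with $D$ primitive in $V$ and $D'$ a dual disk of $D$ in $W$. From that point on, however, your plan diverges in a way that I do not think can be repaired. The proposed parameter set (an integer slope together with a class in $H_1(L;\Integer)\cong\Integer/p\Integer$) and the proposed elementary move (unit slope shift plus multiplication of the invariant by $\pm q$) are structurally incompatible with the answer: each vertex of $\mathcal S(V,W;\Sigma)$ has \emph{infinite} valency, whereas a move of the stated form admits only finitely many outcomes from a given vertex; you assert infinite valency anyway but supply no mechanism consistent with the move you wrote down. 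Your connectivity criterion --- transitivity of $\langle\pm q\rangle$ on the ``attainable invariants'' --- is circular as stated and wrong on its face: for $q=1$ the subgroup $\langle\pm1\rangle\leq(\Integer/p\Integer)^{\times}$ is far from transitive when $p\geq5$, yet the complex is connected for every $L(p,1)$. The appeal to the hyperelliptic involution and to $L(3,1)\cong L(3,2)$ for clauses (3) and (4) is likewise not the actual source of the $4$- and $6$-cycles, and the ``strictly monotone slope'' argument for clause (5) rests on the unestablished parameterization.

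The missing ingredient is the precise translation of $S\cdot T=4$: the paper proves (Lemma \ref{lem:reducing spheres and dual pairs}) that two Haken spheres meet in four points if and only if their dual pairs literally share one disk, while the remaining two disks form a disjoint primitive pair admitting the shared disk as a common dual. Once this is in place, everything reduces to facts about primitive disk complexes imported from \cite{CK15}: every primitive pair has a common dual iff $q=1$ (with exactly two common duals iff $p=2$), primitive triples exist iff $q=2$ or $p=2q+1$ together with the refinements of Lemma \ref{lem:primitive_triples}, and the complex $\mathcal P_D(W)$ of dual disks of a fixed $D$ is a tree with infinite-valency vertices (this is what gives clause (1)). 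Connectivity and acyclicity are then handled by projecting paths and cycles of $\mathcal S(V,W;\Sigma)$ to the subcomplex $\mathcal P'(V)$ of primitive pairs with common duals via $(D,D')\mapsto D$ and exploiting that $\mathcal P'(V)$ is a tree or a union of trees. To salvage your plan you would need to either prove the analogue of Lemma \ref{lem:reducing spheres and dual pairs} and then work with the actual infinitely branching tree structure of the primitive disk complexes, or else substantiate a numerical parameterization that, as proposed, does not exist.
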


\bigskip

\begin{center}
\begin{overpic}[width=15cm, clip]{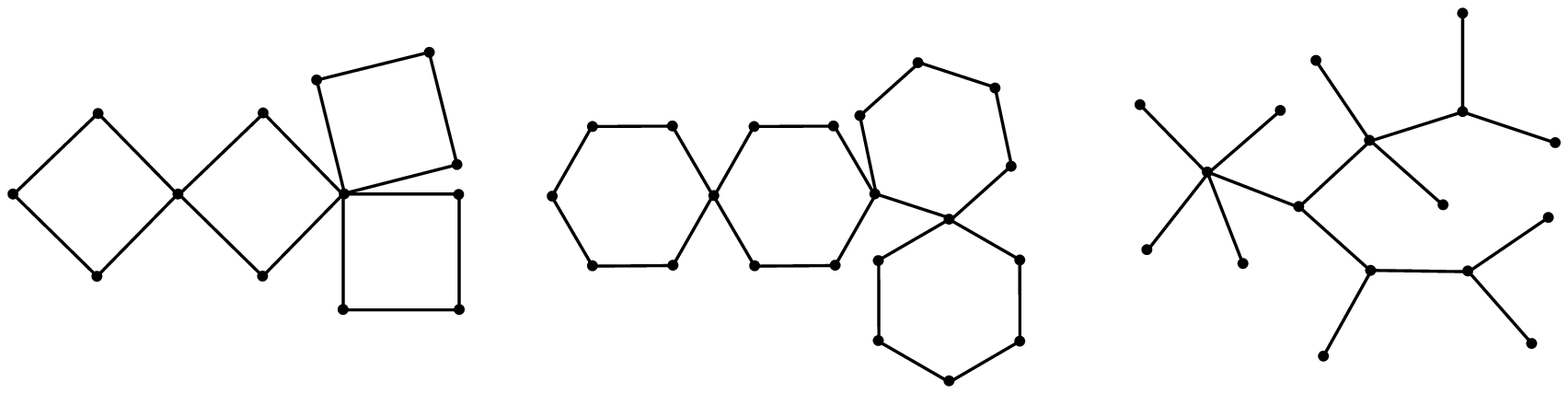}
  \linethickness{3pt}
    \put(50,10){$L(2, 1)$}
    \put(180,10){$L(3, 1)$}
    \put(300,10){$L(p, 1)$ with $p \geq 4$}
\end{overpic}
\captionof{figure}{}
\label{fig:sphere_complexes}
\end{center}

\medskip

Figure \ref{fig:sphere_complexes} illustrates a small portion of the complex $\mathcal S(V, W; \Sigma)$ for each case.
In the theorem, an $n$-cycle, $n \geq 3$, means a $1$-dimensional subcomplex of $\mathcal S(V, W; \Sigma)$ consisting of exactly $n$ distinct edges and $n$ distinct vertices such that each vertex has valency two.
It is remarkable that the sphere complex is not connected for every lens space other than $L(p, 1)$, and the complexes for $L(2, 1)$ and $L(3, 1)$ are connected but not contractible.
The idea of the proof is based on the following fact.
Given the genus-$2$ Heegaard splitting of each of lens spaces, there is a one-to-one correspondence between the collection of Haken spheres and the collection of pairs of a primitive disk and one of its dual disks up to isotopy.
Then the $4$-gon replacement of a Haken sphere to another one can be translated
in terms of such pairs in a simple way, and then we can use the known results on the primitive disks developed in \cite{Cho12} and \cite{CK15}.

We use the standard notation $L = L(p, q)$ for a lens space in standard textbooks.
For example, we refer \cite{Ro} to the reader.
Throughout the paper, $(V, W; \Sigma)$ will denote the genus-$2$ Heegaard splitting of a lens space $L(p, q)$, and we will always assume $1 \leq q \leq p/2$.
We will denote by $\mathcal S(V, W; \Sigma)$ the sphere complex for the splitting $(V, W; \Sigma)$.
For convenience, we will not distinguish (sub)spaces and homeomorphisms from their isotopy classes in their notation.

\section{The primitive disks and their dual disks}
\label{sec:primitive_disks}

An essential disk $D$ in $V$ is said to be {\it primitive}
if there exists an essential disk $D^\prime$ in $W$
such that $\partial D$ intersects $\partial D^\prime $ transversely in a single point.
Such a disk $D^\prime$ is called a {\it dual disk} of $D$, and we call the ordered pair $(D, D^\prime)$ simply a {\it dual pair}.
We note that the disk $D^\prime$ is also primitive in $W$ with a dual disk $D$, and that $W \cup \Nbd(D)$ and $V \cup \Nbd(D^\prime)$ are solid tori.
Primitive disks are necessarily non-separating.
We call a pair $\{D, E\}$ of disjoint primitive disks in a handlebody simply a {\it primitive pair}, and similarly a triple of pairwise disjoint primitive disks a {\it primitive triple}.
When the two disks $D$ and $E$ in a primitive pair $\{D, E\}$ have a common dual disk, we simply say that the pair $\{D, E\}$ admits a common dual disk.
Of course, there exist infinitely many primitive pairs of the handlebodies $V$ and $W$ in the genus-$2$ Heegaard splitting of a lens space.
In fact, any primitive disk is contained in infinitely many primitive pairs.
However not every genus-$2$ Heegaard splitting of a lens space
admits a primitive triple.
See Lemma \ref{lem:primitive_triples} in the following.

Given a dual pair $(D, D^\prime)$, the boundary of a regular neighborhood of $D \cup D^\prime$ is a Haken sphere for the splitting $(V, W; \Sigma)$.
Conversely, if $S$ is a Haken sphere for the splitting $(V, W; \Sigma)$ of a lens space, then $S$ cuts off a 3-ball $B$ from the lens space.
Set $V_1 = V \cap B$ and $W_1 = W \cap B$, then both $V_1$ and $W_1$ are solid tori, and there are meridian disks $D$ and $D^\prime$ of $V_1$ and $W_1$ respectively, such that $D$ and $D^\prime$ are disjoint from $\partial B$, and $\partial D$ intersects $\partial D^\prime$ transversely in a single point.
The disks $D$ and $D^\prime$ are unique up to isotopy, and they form a dual pair $(D, D^\prime)$ of the splitting $(V, W; \Sigma)$.
Thus there is a one-to-one correspondence between the equivalence classes of Haken spheres and the isotopy classes of dual pairs.
Furthermore, we have the following lemma immediately.

\begin{lemma}
\label{lem:reducing spheres and dual pairs}
Let $S$ and $T$ be the Haken spheres corresponding to
the dual pairs $(D, D')$ and $(E, E')$, respectively, of the splitting $(V, W; \Sigma)$,
where $D, E \subset V$ and $D', E' \subset W$.
Then $S \cdot T = 4$ if and only if, up to isotopy, either
\begin{itemize}
  \item
  $\{D, E\}$ is a primitive pair of $V$, and $D'$ is equal to $E'$ and is a common dual disk of $\{D, E\}$, or
  \item
  $\{D', E'\}$ is a primitive pair of $W$, and $D$ is equal to $E$ and is a common dual disk of $\{D', E'\}$.
\end{itemize}
\end{lemma}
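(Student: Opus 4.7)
The plan is to translate the lemma into a statement about simple closed curves on $\Sigma$, exploiting that $S\cap\Sigma$ is the boundary in $\Sigma$ of a regular neighborhood of the figure-eight $\partial D\cup\partial D'$ (and similarly for $T$ with $(E,E')$). Both curves are separating in $\Sigma$ and each bounds a pair of once-punctured tori, with a distinguished one containing the corresponding ``dual-pair spine''.

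For the backward direction, assume $D'=E'$ with $\{D,E\}$ a primitive pair in $V$ having common dual $D'$; the other case is symmetric. I isotope so that $\partial D, \partial E, \partial D'$ are in minimal position on $\Sigma$, with $\partial D,\partial E$ disjoint and each meeting $\partial D'$ in a single transverse point. Writing $R_D := N(\partial D\cup\partial D')$ and $R_E := N(\partial E\cup\partial D')$, once-punctured tori in $\Sigma$, the curves $\alpha := S\cap\Sigma = \partial R_D$ and $\beta := T\cap\Sigma = \partial R_E$ can intersect only in the common annular piece $R_D\cap R_E \simeq N(\partial D')$. A direct local computation there yields $|\alpha\cap\beta|=4$, and since $S$ and $T$ are inequivalent Haken spheres, Scharlemann--Thompson \cite{ST03} gives $S\cdot T\ge 4$, hence $S\cdot T=4$.

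For the forward direction, suppose $S\cdot T=4$ and realize $\alpha,\beta$ with minimal intersection. Let $R$ be the once-punctured torus component of $\Sigma\setminus\alpha$ whose spine is the figure-eight $\partial D\cup\partial D'$. Since $\alpha$ is separating and $|\alpha\cap\beta|=4$, the curve $\beta$ meets $R$ in exactly two disjoint essential arcs. The key structural claim is that, because $\beta$ itself bounds a once-punctured torus in $\Sigma$ whose spine is a figure-eight $\partial E\cup\partial E'$ with $E\subset V$ and $E'\subset W$, these two arcs must be parallel in $R$ to one of the two core curves $\partial D$ or $\partial D'$. The first subcase forces $\partial E = \partial D$, hence $E=D$ and $\{D',E'\}$ is a primitive pair in $W$ with common dual $D$; the second subcase forces $\partial E' = \partial D'$, hence $E'=D'$ and $\{D,E\}$ is a primitive pair in $V$ with common dual $D'$.

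The main obstacle is this structural claim in the forward direction. Establishing it requires classifying disjoint essential arc pairs on the once-punctured torus $R$ up to isotopy, and then eliminating all but the two ``parallel to a core curve'' configurations by combining (i) the gluing compatibility of $\beta\cap R$ with $\beta\cap(\Sigma\setminus R)$ at the four points of $\alpha\cap\beta$, (ii) the requirement that one complementary region of $\beta$ in $\Sigma$ is a once-punctured torus whose spine is a $V$-disk boundary crossed once by a $W$-disk boundary, and (iii) the minimality $|\alpha\cap\beta|=4$, which rules out non-parallel arc configurations whose closure would necessarily introduce additional crossings with $\alpha$.
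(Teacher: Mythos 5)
Your backward direction is fine, and in fact more careful than the paper's (which dismisses it as clear): exhibiting a position with four intersection points and invoking Scharlemann--Thompson for the lower bound $S\cdot T\geq 4$ is a clean way to certify minimality. The approach is also genuinely different from the paper's: you work entirely with the curves $\alpha=S\cap\Sigma$ and $\beta=T\cap\Sigma$ and arcs on the once-punctured torus $R$, whereas the paper works three-dimensionally, using the two arcs of $(S\cap V)\cap(T\cap V)$ to cut $T\cap V$ into two bigons and a rectangle, locating these pieces in the solid tori $V_1, V_2, W_1, W_2$ produced by $S$, and then reading off the dual pair of $T$ from a meridian/longitude analysis in those solid tori.

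The problem is that your forward direction is not a proof: the ``key structural claim'' --- that the two arcs of $\beta\cap R$ are parallel to $\partial D$ or to $\partial D'$ --- is exactly where all the content lies, and you state it, flag it as the main obstacle, and only list ingredients (i)--(iii) that might establish it. Concretely, two disjoint essential arcs in a once-punctured torus need not be parallel to each other (the arc complex of $R$ is the Farey graph, so arcs of slopes $0$ and $\infty$, i.e.\ one parallel to $\partial D$ and one parallel to $\partial D'$, admit disjoint representatives), and arcs of arbitrary slope occur; purely surface-level considerations such as $\beta$ being separating and meeting $\alpha$ minimally in four points do not eliminate these configurations. What rules them out is three-dimensional information --- that $\beta$ bounds disks in both $V$ and $W$ forming a dual pair --- and extracting that information is precisely what the paper's bigon/rectangle argument inside the handlebodies accomplishes. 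In addition, even granting the structural claim, your final step (``the first subcase forces $\partial E=\partial D$'') is asserted without justification: knowing the arcs of $\beta\cap R$ are parallel to $\partial D$ identifies a curve isotopic to $\partial D$ in the once-punctured torus bounded by $\beta$, but you must still argue that the meridian disk of $V\cap B_T$ determined by $T$ is $D$ itself and that $\{D',E'\}$ can be made disjoint. As it stands the forward implication has a genuine gap.
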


\begin{proof}
The ``if'' part is clear. For the ``only if'' part, we assume $S \cdot T = 4$.
The sphere $S$ cuts off a $3$-ball $B$ from the lens space, and we have four solid tori $V_1 = V \cap B$, $W_1 = W \cap B$, $V_2 = \overline{V - V_1}$ and $W_2 = \overline{W - W_1}$.
For the dual pair $(D, D')$ corresponding to $S$, $D$ and $D'$ are meridian disks of $V_1$ and $W_1$ respectively.
Since $S \cdot T = 4$, the disk $S \cap V$ divides the disk $T \cap V$ into two bigons $\Delta_1$ and $\Delta_2$ with a rectangle $R$.
Similarly, $S \cap W$ divides the disk $T \cap W$ into two bigons $\Delta'_1$ and $\Delta'_2$ with a rectangle $R'$.
Both of the bigons $\Delta_1$ and $\Delta_2$ are contained in either $V_1$ or $V_2$.

\begin{center}
\begin{overpic}[width=14cm, clip]{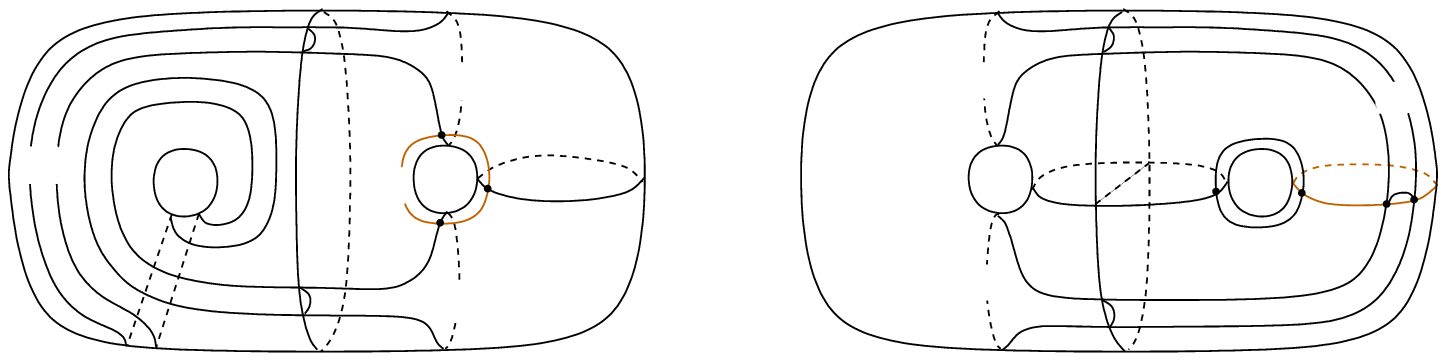}
  \linethickness{3pt}
    \put(20,10){$V_2$}
    \put(170,10){$V_1$}
    \put(220,10){$W_2$}
    \put(380,10){$W_1$}
    \put(80,0){$S \cap V$}
    \put(290,0){$S \cap W$}
    \put(18,62){$R$}
    \put(130,85){$\Delta_1$}
    \put(130,30){$\Delta_2$}
    \put(150,47){$D$}
    \put(101,54){$\partial D'$}
    \put(370,78){$R'$}
    \put(260,85){$\Delta'_1$}
    \put(260,30){$\Delta'_2$}
    \put(360,45){$D'$}
    \put(315,45){$E'$}
    \put(335,75){$\partial D$}
\end{overpic}
\captionof{figure}{}
\label{fig:sphere_and_dual_pair}
\end{center}

First, consider the case that $\Delta_1$ and $\Delta_2$ are contained in $V_1$
(see Figure \ref{fig:sphere_and_dual_pair}).
Then they are meridian disks of $V_1$, which are disjoint from the meridian disk $D$ of $V_1$
up to isotopy.
Since the circle $\partial D$ is a longitude of the solid torus $W_1$, the rectangle $R'$ of the disk $T \cap W$ is contained in the solid torus $W_1$, and is disjoint from $\partial D$, and intersects $D'$ in a single arc.
Consequently, the two bigons $\Delta'_1$ and $\Delta'_2$ are contained in $W_2$, and hence the disk $T \cap W$ is disjoint from $\partial D$ and intersects $D'$ in a single arc.
The disk $T \cap W$ divides $W$ into two solid tori.
Let $W'_1$ be one of them containing $\partial D$ as a longitude.
Then the disk $E'$ of the dual pair $(E, E')$ is a meridian disk of $W'_1$, and, up to isotopy, $E'$ is disjoint from $D'$ and intersects the longitude $\partial D$ in a single point.
Thus we have that  $\{D', E'\}$ is a primitive pair of $W$, and $D$ is equal to $E$ and is a common dual disk of $\{D', E'\}$.
If $\Delta_1$ and $\Delta_2$ are contained in $V_2$, we conclude that $\{D, E\}$ is a primitive pair of $V$, and $D'$ is equal to $E'$ and is a common dual disk of $\{D, E\}$ by a similar argument.
\end{proof}

By the lemma, we can translate the properties of Haken spheres into those of dual pairs.
The properties of primitive disks and their dual disks that we need were already developed in
\cite{CK15}, which are summarized in the following two lemmas.

\begin{lemma}[Theorem 4.3 \cite{CK15}]
\label{lem:common_duals}
Given a lens space $L = L(p, q)$, $1 \leq q \leq p/2$, with a genus-$2$ Heegaard splitting $(V, W; \Sigma)$, each primitive pair in $V$ has a common dual disk if and only if $q = 1$.
In this case, if $p \geq 3$, the pair has a unique common dual disk, and if $p = 2$, the pair has  exactly two disjoint common dual disks, which form a primitive pair in $W$.
\end{lemma}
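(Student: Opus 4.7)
The plan is to reduce common-dual-disk questions to a combinatorial search on the Heegaard surface $\Sigma$, and then apply the structural classification of primitive disks from \cite{Cho12} and \cite{CK15}. Since the genus-$2$ Heegaard splitting of $L(p,q)$ is unique up to isotopy by Bonahon-Otal, I realize it concretely as a stabilization of the genus-$1$ splitting of $L(p,q)$, identifying a distinguished primitive disk $E_0 \subset V$ coming from the stabilization handle. Every other primitive disk of $V$ is then described through an explicit tree of elementary moves starting from $E_0$ as in \cite{Cho12, CK15}, and this description supplies the combinatorial data for an arbitrary primitive pair $\{D, E\}$.

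The key reformulation is the following. A common dual disk for $\{D, E\}$ is an essential disk $D' \subset W$ whose boundary meets each of $\partial D$ and $\partial E$ in exactly one point. Cutting $\Sigma$ along $\partial D \cup \partial E$ produces a $4$-holed sphere $P$, and $\partial D'$ restricts to a single essential arc $\alpha$ in $P$ joining a copy of $\partial D$ to a copy of $\partial E$. Thus existence of a common dual is equivalent to: does some such arc $\alpha$ close up on $\Sigma$ to a simple closed curve bounding an essential disk in $W$?

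For the ``if'' direction ($q = 1$), I would exploit the self-duality of $L(p,1)$ visible in the relation $1 \cdot 1 \equiv 1 \pmod{p}$: this $V \leftrightarrow W$ symmetry allows one to realize any primitive pair as the image, under a splitting-preserving homeomorphism, of the standard primitive pair coming from the stabilization, whose common dual is immediate. For the ``only if'' direction ($q \geq 2$), I would exhibit an explicit primitive pair whose candidate arcs $\alpha$ in $P$ all close up to curves whose winding number modulo $p$ in the lens-space direction obstructs bounding a disk in $W$. Uniqueness for $p \geq 3$ then follows because two common duals would give boundary curves on $\Sigma$ that one can show are isotopic; the exceptional case $p = 2$ arises because a hyperelliptic-type involution of $\Sigma$ exchanges two disjoint common duals that themselves form a primitive pair in $W$.

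The main obstacle I anticipate is the ``only if'' direction: producing the right primitive pair for each $q$ with $2 \leq q \leq p/2$ and verifying that \emph{every} candidate arc $\alpha$ closes up to a non-bounding curve. I expect to carry this out inductively along the tree of primitive disks from \cite{Cho12, CK15}, reducing the verification to a finite combinatorial check at each branch.
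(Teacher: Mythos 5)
The paper does not prove this lemma at all: it is imported verbatim as Theorem~4.3 of \cite{CK15}, so there is no internal argument to compare yours against. In \cite{CK15} the result is obtained algebraically, via the standard dictionary in which a disk $D\subset V$ is primitive if and only if $\partial D$ represents a primitive element of the rank-two free group $\pi_1(W)$; the existence, uniqueness, and the exceptional $p=2$ behaviour of common duals are then read off from an analysis of the pair of words in $\pi_1(W)$ carried by $\partial D$ and $\partial E$ (and their images in $\pi_1(L)=\mathbb{Z}_p$, which is where the hypothesis $q=1$ enters). Your surface-combinatorial reformulation --- a common dual corresponds to an essential arc in the $4$-holed sphere $\Sigma$ cut along $\partial D\cup\partial E$ that closes up to the boundary of a disk in $W$ --- is a reasonable starting point, but it is a genuinely different route and the hard content is not yet in place.

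Two steps in your outline would fail as written. For the ``if'' direction, the $V\leftrightarrow W$ symmetry you invoke exists for \emph{every} lens space (Bonahon--Otal show $V$ and $W$ are isotopic in $L(p,q)$ for all $q$), so it cannot be what singles out $q=1$; and the claim that every primitive pair of $V$ is carried to the standard stabilization pair by a splitting-preserving homeomorphism is precisely the kind of transitivity statement that in \cite{Cho12,CK15} is \emph{deduced from} the structure of common duals, so you risk circularity unless you prove it independently. For the ``only if'' direction, the set of essential arcs in the $4$-holed sphere joining a copy of $\partial D$ to a copy of $\partial E$ is infinite (they differ by twists about boundary and core curves), so there is no finite check; and ``winding number modulo $p$'' is not the right obstruction --- a curve on $\partial W$ bounds a disk in the genus-$2$ handlebody $W$ if and only if it is trivial in the free group $\pi_1(W)$, and it is this free-group condition (not an abelian invariant) that must be obstructed. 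Finally, the uniqueness claim for $p\geq 3$ is asserted rather than argued, and the $p=2$ remark is off: the hyperelliptic involution of $\Sigma$ preserves every isotopy class of simple closed curves, so it cannot exchange two non-isotopic common duals. To complete your approach you would need either the free-group characterization of primitivity or an explicit normal form for the arcs, neither of which the proposal supplies.
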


If $L$ is not $L(p, 1)$, then any primitive pair in $V$ admits either a unique common dual disk or no common dual disk up to isotopy. Further, any primitive disk is contained in infinitely many primitive pairs having a common dual disk and simultaneously in infinitely many primitive pairs having no common dual disk up to isotopy.

\begin{lemma}[Theorem 4.4 \cite{CK15}]
\label{lem:primitive_triples}
Given a lens space $L(p, q)$, for $1 \leq q \leq p/2$, with a genus-$2$ Heegaard splitting $(V, W; \Sigma)$ of $L(p, q)$, there is a primitive triple in $V$ if and only if $q = 2$ or $p= 2q +1$.
In this case, we have the following refinements.
\begin{enumerate}
\item If $p = 3$, then each primitive pair is contained in a unique primitive triple.
\item If $p = 5$, then each primitive pair having a common dual disk is contained in a unique primitive triple, and each having no common dual disk is contained in exactly two primitive triples.
\item If $p \geq 7$, then each primitive pair having a common dual disk is contained either in a unique or in no primitive triple, and each having no common dual disk is contained in a unique primitive triple.
\item Further, if $p = 3$, then each of the three primitive pairs in any primitive triple in $V$ has a unique common dual disk, which form a primitive triple in $W$.
    If $p \geq 5$, then exactly one of the three primitive pairs in any primitive triple has a common dual disk, which is unique.
\end{enumerate}
\end{lemma}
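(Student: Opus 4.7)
My plan is to reduce the statement to explicit combinatorial and arithmetic computations via the parameterization of primitive disks in $V$ developed in \cite{Cho12} and \cite{CK15}. Fix a complete meridian system $\{E_1, E_2\}$ of $W$ and write $c_i = \partial E_i \subset \Sigma$. Each primitive disk $D$ in $V$ is then encoded by the weighted arc pattern cut out by $\partial D$ on the four-holed sphere $\Sigma \setminus \Nbd(c_1 \cup c_2)$, together with the $(p, q)$-data specifying how $W$ is glued back to $V$. Primitivity of $D$ translates into a specific arithmetic condition on these weights.

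For the existence claim, I would analyze when three pairwise disjoint primitive disks $D_1, D_2, D_3$ can sit simultaneously inside $V$. Disjointness of their boundary curves on $\Sigma$ combined with the primitivity condition for each $D_i$ yields a system of integer equations in the weight parameters governed by $p$ and $q$. A direct case analysis of this system shows that it admits a solution if and only if $q = 2$ or $p = 2q + 1$. This is the step I expect to be the main obstacle: showing that no further configurations exist requires carefully bounding how three disjoint essential arc-and-curve systems can coexist in a four-holed sphere and extracting the exact divisibility constraints on $(p, q)$, which takes delicate combinatorial bookkeeping.

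For the refinements~(1)--(3), I would fix a primitive pair $\{D, E\}$ in $V$ and count the primitive disks $F$ disjoint from both by solving the residual equation in the weight parameter space. The dichotomy between primitive pairs having a common dual disk and those having none, already exhibited in Lemma~\ref{lem:common_duals}, reappears here as a parity-type condition on the solutions and produces the different counts in the cases $p = 3$, $p = 5$, and $p \geq 7$.

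Finally, statement~(4) would follow by invoking Lemma~\ref{lem:common_duals} on each of the three constituent pairs of a primitive triple. For $p = 3$, every primitive pair admits a unique common dual disk, and the symmetric role of $V$ and $W$ in $L(3, 1)$ forces these three common duals to be pairwise disjoint and to themselves form a primitive triple in $W$. For $p \geq 5$, the arithmetic constraints of Lemma~\ref{lem:common_duals} can be satisfied by at most one of the three pairs, and the uniqueness of its common dual disk is then inherited directly from that lemma.
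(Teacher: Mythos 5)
This lemma is not proved in the paper at all: it is imported verbatim as Theorem~4.4 of \cite{CK15}, so there is no internal argument to compare yours against. Judged on its own terms, your proposal identifies the right machinery --- the encoding of primitive disks by weighted arc systems on the four-holed sphere obtained by cutting $\Sigma$ along a complete meridian system, which is indeed the framework of \cite{Cho12} and \cite{CK15} --- but it is a plan rather than a proof. The entire content of the lemma lives in the step you yourself flag as ``the main obstacle'': the case analysis showing that the system of disjointness-plus-primitivity constraints is solvable if and only if $q = 2$ or $p = 2q+1$, and the exact counts of primitive disks $F$ completing a given pair to a triple in the cases $p = 3$, $p = 5$, $p \geq 7$. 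None of that is carried out; the conclusions are simply asserted to follow from ``delicate combinatorial bookkeeping.'' Since the trichotomy in (1)--(3) and the existence criterion are precisely what distinguish this result from a routine observation, the proposal does not yet constitute a proof.

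There is also a concrete logical gap in your treatment of item~(4). For $p \geq 5$ you claim that ``the arithmetic constraints of Lemma~\ref{lem:common_duals} can be satisfied by at most one of the three pairs,'' but Lemma~\ref{lem:common_duals} only addresses whether \emph{every} primitive pair admits a common dual (equivalently $q = 1$) and, in the remark following it, says that for $q \geq 2$ each pair has either one common dual or none --- with infinitely many pairs of each kind. Nothing in that lemma controls how the two kinds of pairs distribute among the three pairs of a fixed triple, so ``exactly one of the three pairs has a common dual'' requires a separate argument at the level of triples. Similarly, for $p = 3$ the assertion that the three common dual disks are pairwise disjoint and form a primitive triple in $W$ does not follow merely from the fact that $V$ and $W$ are isotopic in $L(3,1)$; that symmetry gives an abstract isomorphism of the two pictures, not disjointness of the three specific dual disks attached to a given triple. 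Both points need the same explicit arc-weight computation you deferred.
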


Recalling the fact that $V$ and $W$ are isotopic in $L(p,q)$ shown by
Bonahon-Otal \cite{BO83}, it is clear that
the above two lemmas still hold when we exchange $V$ and $W$ in the statements.

\section{The complex of primitive disks}
\label{sec:primitive_disk_complex}

Given an irreducible $3$-manifold with compressible boundary, the {\it disk complex} of the manifold is a simplicial complex defined as follows.
The vertices are the isotopy classes of essential disks in the manifold, and a collection of $k+1$ vertices spans a $k$-simplex if and only if it admits a collection of representative disks which are pairwise disjoint.
It is well known from McCullough \cite{McC91}, the disk complex for any irreducible $3$-manifold with compressible boundary is contractible, and further, in
Cho \cite{Cho08}, a useful criterion was developed to determine whether a given subcomplex of a disk complex is contractible or not.
We will not introduce the details here, but just summarize the results we need.

Consider the case that the manifold is a genus-$2$ handlebody $V$.
Then the disk complex of $V$ is a $2$-dimensional contractible complex.
We denote by $\mathcal D(V)$ the {\it non-separating disk complex} of $V$, which is the full subcomplex of the disk complex spanned by the vertices of non-separating disks.
It is easy to see that $\mathcal D(V)$ is also $2$-dimensional and every edge of $\mathcal D(V)$ is contained in infinitely but countably many $2$-simplices.
Further, the complex $\mathcal D(V)$ is contractible, and the link of any vertex of $\mathcal D(V)$ is also contractible, i.e. the link is a tree, see \cite{McC91}, \cite{Cho08}.
Thus, we can describe the structure of the non-separating disk complex $\mathcal D(V)$;
a portion of $\mathcal D(V)$ is described  in Figure \ref{disk_complex}.
We observe that $\mathcal D(V)$ deformation retracts to a tree in the barycentric subdivision of it.
Actually, this tree is a dual complex of $\mathcal D(V)$.
From the structure of $\mathcal D(V)$, every connected component of any full subcomplex of $\mathcal D(V)$ is contractible.

\begin{remark}
Here is a simple but important observation on the structure of $\mathcal D(V)$.
If $\mathcal C$ is any $n$-cycle in $\mathcal D(V)$ with $n \geq 4$, then there is no $2$-simplex of which the three edges are all contained in $\mathcal C$.
But there exist at least two $2$-simplices in $\mathcal D(V)$ such that exactly two edges of each of the two $2$-simplices are contained in $\mathcal C$ and further the two $2$-simplices have no common edges contained in $\mathcal C$.
\end{remark}

\begin{center}
\begin{overpic}[width=7cm, clip]{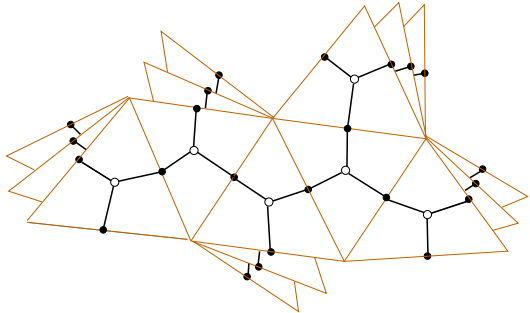}
  \linethickness{3pt}
\end{overpic}
\captionof{figure}{A portion of the non-separating disk complex $\mathcal D(V)$ of a genus-$2$ handlebody $V$ with its dual complex.}
\label{disk_complex}
\end{center}

\bigskip

Now we return to the genus-$2$ Heegaard splitting $(V, W; \Sigma)$ of a lens space $L = L(p, q)$ with $1 \leq q \leq p/2$.
The {\it primitive disk complex} $\mathcal P(V)$ for the splitting $(V, W; \Sigma)$ is defined to be the full subcomplex of $\mathcal D(V)$ spanned by the vertices of primitive disks in $V$.
The primitive disk complexes and their variations are studied
in several settings in \cite{CK}, \cite{CK13}, \cite{CK15} and \cite{Kod}.
The complete combinatorial structure of the primitive disk complex $\mathcal P(V)$ for each lens space has been well understood (see \cite[Theorem 4.5 and Figure 11]{CK15}).
In particular, $\mathcal P(V)$ is contractible if $p \equiv \pm 1 \pmod q $, and otherwise it consists of infinitely many tree components.
By Lemma \ref{lem:primitive_triples}, the complex $\mathcal P(V)$ has a $2$-simplex if and only if $q = 2$ or $p = 2q + 1$.

We define two special subcomplexes,
one is of $\mathcal P(V)$ and the other is of $\mathcal{P}(W)$.
First, $\mathcal P'(V)$ is the subcomplex of $\mathcal P(V)$ defined as follows.
The vertices of $\mathcal P'(V)$ are the vertices of $\mathcal P(V)$, and distinct $k+1$ vertices $D_0, D_1, \cdots, D_k$ of $\mathcal P'(V)$ span a $k$-simplex if and only if the primitive pair $\{D_i, D_j\}$ admits a common dual disk for each $0 \leq i < j \leq k$.
Next, given a primitive disk $D$ in $V$, we denote by $\mathcal P_D(W)$ the full subcomplex of the primitive disk complex $\mathcal P(W)$ spanned by the vertices of dual disks of $D$.

\begin{lemma}
\label{lem:primitive_disk_complex}
Let $(V, W; \Sigma)$ be a genus-$2$ Heegaard splitting of a lens space $L = L(p, q)$ with $1 \leq q \leq p/2$.
\begin{enumerate}
  \item If $L$ is $L(p, 1)$ with $p \neq 3$, then $\mathcal P'(V)$ equals $\mathcal P(V)$, which is a tree.
  \item If $L$ is $L(3, 1)$, then $\mathcal P'(V)$ equals $\mathcal P(V)$, a contractible $2$-dimensional complex, each of whose edge is contained in a unique $2$-simplex.
  \item If $L$ is not $L(p, 1)$, then $\mathcal P'(V)$ is not connected, and it consists of infinitely many tree components.
\end{enumerate}
\end{lemma}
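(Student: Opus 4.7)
The plan is to derive each assertion from three inputs: Lemma~\ref{lem:common_duals} and Lemma~\ref{lem:primitive_triples}, which describe exactly which primitive pairs and primitive triples admit common dual disks, and the explicit combinatorial description of $\mathcal{P}(V)$ given in \cite[Theorem~4.5 and Figure~11]{CK15}. By definition $\mathcal{P}'(V)$ is the subcomplex of $\mathcal{P}(V)$ whose simplices are the vertex sets of $\mathcal{P}(V)$ on which every pair admits a common dual disk, so the first two lemmas will pinpoint which simplices of $\mathcal{P}(V)$ survive in $\mathcal{P}'(V)$, while the third supplies the global structure.

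Parts (1) and (2) should be almost immediate. In both $q=1$, so Lemma~\ref{lem:common_duals} says every primitive pair in $V$ admits a common dual disk, and hence $\mathcal{P}'(V)$ and $\mathcal{P}(V)$ share the same $1$-skeleton. For (1), with $p\neq 3$, neither $q=2$ nor $p=2q+1$ holds, so Lemma~\ref{lem:primitive_triples} rules out primitive triples in $V$ and forces $\mathcal{P}(V)$ to be $1$-dimensional; \cite{CK15} then identifies it with a tree. For (2), with $p=3=2q+1$, Lemma~\ref{lem:primitive_triples}(1) says each primitive pair sits in a unique triple, and Lemma~\ref{lem:primitive_triples}(4) says all three pairs in any primitive triple have a common dual disk; so every $2$-simplex of $\mathcal{P}(V)$ also lies in $\mathcal{P}'(V)$, giving $\mathcal{P}'(V)=\mathcal{P}(V)$ with the stated structure.

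For part (3), the hypotheses together with $\gcd(p,q)=1$ and $1\leq q\leq p/2$ force $q\geq 2$ and $p\geq 5$. The central input is then Lemma~\ref{lem:primitive_triples}(4) in the form: at most one of the three primitive pairs in any primitive triple of $V$ admits a common dual disk. First I verify that $\mathcal{P}'(V)$ is acyclic. A $3$-cycle would be a primitive triple in which every pair admits a common dual disk, impossible by Lemma~\ref{lem:primitive_triples}(4). For an $n$-cycle $\mathcal{C}\subset\mathcal{P}'(V)$ with $n\geq 4$, I invoke the remark in Section~\ref{sec:primitive_disk_complex} applied to $\mathcal{C}$ viewed inside $\mathcal{D}(V)$: there is a $2$-simplex $\{A,B,C\}$ of $\mathcal{D}(V)$ two of whose edges, say $AB$ and $BC$, lie on $\mathcal{C}$. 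Since those edges lie in $\mathcal{P}'(V)$, the disks $A,B,C$ are pairwise disjoint primitive disks (hence form a primitive triple) and the pairs $\{A,B\},\{B,C\}$ both admit common dual disks, again contradicting Lemma~\ref{lem:primitive_triples}(4). Thus every connected component of $\mathcal{P}'(V)$ is a tree.

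It remains to show $\mathcal{P}'(V)$ has infinitely many components. When $p\not\equiv\pm 1\pmod q$, \cite{CK15} already exhibits $\mathcal{P}(V)$ as a disjoint union of infinitely many tree components, and passing to $\mathcal{P}'(V)$ (same vertex set, no new edges) cannot decrease the number of components. The hard case, and the main obstacle of the proof, is when $p\equiv\pm 1\pmod q$ with $L\neq L(p,1)$---this covers $q=2$ for every admissible $p$, and $p=2q+1$ for $q\geq 3$---in which $\mathcal{P}(V)$ is $2$-dimensional and contractible. Here my plan is to use the explicit shape of $\mathcal{P}(V)$ from \cite[Figure~11]{CK15}, in which $\mathcal{P}(V)$ carries a natural dual tree whose vertices correspond to its $2$-simplices. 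By Lemma~\ref{lem:primitive_triples}(4) exactly one of the three edges of each $2$-simplex of $\mathcal{P}(V)$ survives in $\mathcal{P}'(V)$, so forming $\mathcal{P}'(V)$ amounts to deleting, at each dual-tree vertex, two of the three edges of the corresponding $2$-simplex. My strategy is to exploit this deletion pattern, together with the fact noted after Lemma~\ref{lem:common_duals} that each primitive disk lies in infinitely many primitive pairs without a common dual disk, to construct a locally constant integer-valued invariant on $\mathcal{P}'(V)$ taking infinitely many values, thereby producing infinitely many components.
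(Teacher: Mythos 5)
Your proposal is more ambitious than the paper's own treatment: the authors dispose of this lemma in one line, citing Lemma~\ref{lem:primitive_triples} together with \cite[Theorem 5.5]{CK15}, whereas you try to rebuild it from Lemmas~\ref{lem:common_duals} and \ref{lem:primitive_triples} and the shape of $\mathcal P(V)$. Parts (1) and (2) of your argument are fine, and your acyclicity argument in part (3) is correct and worth keeping: a $3$-cycle in $\mathcal P'(V)$ would be a primitive triple with all three pairs admitting common duals, and for an $n$-cycle with $n\geq 4$ the Remark of Section~\ref{sec:primitive_disk_complex} produces a $2$-simplex of $\mathcal D(V)$ whose three vertices all lie on the cycle (hence form a primitive triple) with two of its pairs admitting common duals; both situations contradict Lemma~\ref{lem:primitive_triples}(4).

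The genuine gap is the claim that $\mathcal P'(V)$ has infinitely many components when $\mathcal P(V)$ is connected. There you only announce a strategy --- ``construct a locally constant integer-valued invariant taking infinitely many values'' --- without defining the invariant or verifying anything about it, and this is exactly the content the paper outsources to \cite[Theorem 5.5]{CK15}. As it stands nothing in your write-up rules out, say, $\mathcal P'(V)$ being a single tree: the observation that each primitive disk $D$ lies in infinitely many pairs $\{D,E_i\}$ without common duals only shows each $E_i$ lies in a component different from that of $D$, not that the $E_i$ lie in pairwise distinct components (indeed, by Lemma~\ref{lem:primitive_triples}(4) the two ``bad'' pairs of a primitive triple share a vertex, so distinct bad edges at $D$ can be completed to a surviving edge $E_iE_j$). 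A concrete way to close the gap when every bad edge lies in a unique $2$-simplex (the case $p\geq 7$ in Lemma~\ref{lem:primitive_triples}(3)): since $\mathcal P(V)$ is simply connected and no two $2$-simplices share an edge, the cycle space of $\mathcal P(V)^{(1)}$ is generated by the triangle boundaries, so deleting the two bad edges of a triangle removes first a non-bridge and then a bridge, increasing the number of components by exactly one per triangle; as there are infinitely many triangles, $\mathcal P'(V)$ has infinitely many components. The case $(p,q)=(5,2)$, where each bad edge lies in two triangles, needs a separate check. You should also correct the case split: $p\equiv\pm1\pmod q$ with $q\geq 3$ and $p\neq 2q+1$ (e.g.\ $L(8,3)$) does occur, and there $\mathcal P(V)$ is a connected \emph{tree}, not a $2$-dimensional complex as you assert --- though in that situation the conclusion is immediate, since deleting the infinitely many bad edges from a tree leaves infinitely many components.
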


Lemma \ref{lem:primitive_disk_complex} is a direct consequence of
Lemma \ref{lem:primitive_triples} (\cite[Theorem 4.4]{CK15}) and \cite[Theorem 5.5]{CK15}.

\begin{lemma}
\label{lem:complex_of_dual_disks}
Let $(V, W; \Sigma)$ be the genus-$2$ Heegaard splitting of a lens space $L = L(p, q)$ with $1 \leq q \leq p/2$. Then given any primitive disk $D$ in $V$, the complex $\mathcal P_D(W)$ is a tree whose vertices have infinite valency.
\end{lemma}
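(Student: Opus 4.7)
The plan is to verify that $\mathcal{P}_D(W)$ is (a) one-dimensional, (b) acyclic, (c) connected, and (d) has infinite valency at every vertex. Property (a) is immediate from Lemma~\ref{lem:primitive_triples}: three pairwise disjoint dual disks of $D$ in $W$ would form a primitive triple in $W$ of which $D$ is a common dual of each of the three constituent pairs, but by item (4) of that lemma applied to $W$ via the $V$-$W$ exchange symmetry noted after its statement, this cannot occur (for $p=3$ the three common duals of the pairs in any primitive triple in $W$ form a primitive triple in $V$ and are hence distinct; for $p\ge 5$ at most one of the three pairs admits a common dual at all). Given (a), property (b) follows from the Remark on cycles in $\mathcal{D}(W)$: a cycle in $\mathcal{P}_D(W)$ would have length $\ge 4$ and would yield a $2$-simplex of $\mathcal{D}(W)$ exactly two of whose edges lie on the cycle; these two edges share a cycle vertex, forcing the third vertex to be in the cycle too, so the $2$-simplex is some $\{D'_{i-1}, D'_i, D'_{i+1}\}$. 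Its vertices are pairwise disjoint and dual to $D$, hence this $2$-simplex lies in $\mathcal{P}_D(W)$, contradicting (a).

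For (c), I induct on $|\partial D'_1 \cap \partial D'_2|$ with $D'_1, D'_2$ two dual disks of $D$ in minimal pairwise position on $\Sigma$. If this is zero, $\{D'_1, D'_2\}$ is an edge. Otherwise, take an outermost arc $\alpha$ of $D'_1 \cap D'_2$ in $D'_1$, cutting off a subdisk $\delta \subset D'_1$ with $\partial \delta = \alpha \cup \beta$, $\beta \subset \partial D'_1$. Surgering $D'_2$ along $\delta$ produces two disks $D''$ and $D'''$ in $W$, both disjoint from $D'_1$. Since $|\partial D'_2 \cap \partial D| = 1$ and $|\beta \cap \partial D| \in \{0,1\}$, a direct boundary count gives
\[
|\partial D'' \cap \partial D| + |\partial D''' \cap \partial D| = 1 + 2|\beta \cap \partial D|,
\]
which is odd, so at least one of $D'', D'''$ meets $\partial D$ transversely in exactly one point, and being essential it is a dual disk of $D$; call it $D'_3$. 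Then $\{D'_1, D'_3\}$ is an edge of $\mathcal{P}_D(W)$ and $|\partial D'_2 \cap \partial D'_3| < |\partial D'_1 \cap \partial D'_2|$ by the standard surgery count, so the inductive hypothesis produces a path from $D'_2$ to $D'_3$ and thence to $D'_1$.

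For (d), fix a dual disk $D'$ of $D$. Any essential disk $E$ in $W$ disjoint from $D'$ with $|\partial E \cap \partial D| = 1$ is automatically primitive with $D$ as a witnessing dual, hence a dual disk of $D$; it therefore suffices to construct infinitely many such $E$. Cut $W$ along $D'$ to obtain the solid torus $W_{D'}$, on whose boundary torus one finds two scar disks $D'_+, D'_-$ and an arc $a$ coming from $\partial D$ and running from $\partial D'_+$ to $\partial D'_-$. Essential disks in $W_{D'}$ are meridian disks, and those whose boundaries on $\partial W_{D'}$ avoid the scars and cross $a$ transversely in a single point correspond exactly to the desired $E$'s. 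Starting from one such meridian disk and iterating a Dehn twist along a meridian curve of $\partial W_{D'}$ disjoint from the scars and from $a$ yields an infinite family of such meridian disks, pairwise non-isotopic in $W$, giving infinite valency at $D'$.

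The main obstacle is step (c). The parity observation forces at least one surgered disk to be a dual disk of $D$, but one must further confirm strict decrease in the inductive measure $|\partial D'_2 \cap \partial D'_3| < |\partial D'_1 \cap \partial D'_2|$. This follows from the standard surgery accounting: the endpoints of the arc $\alpha$ are removed from the intersection on $\partial D'_2$, while the newly introduced boundary arc $\beta$ on $\partial D'_3$ can be chosen disjoint from $\partial D'_2$ by the initial minimality of $\partial D'_1 \cap \partial D'_2$.
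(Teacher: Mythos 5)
Your steps (a)--(c) are essentially sound, and for the tree structure they track the paper's own route: the paper likewise gets $1$-dimensionality from the non-existence of a primitive triple of dual disks of $D$, and its connectivity/contractibility rests on exactly your surgery observation (one of the two disks obtained by outermost-arc surgery is again a dual disk of $D$), fed into the general criterion of Theorem 2.1 of \cite{CK15} rather than your explicit induction; your separate acyclicity argument via the Remark on $n$-cycles in $\mathcal D(W)$ is a valid alternative to invoking that criterion. Two small repairs are needed in (c). First, with $\delta\subset D'_1$ and the surgery performed on $D'_2$, the two resulting disks are disjoint from $D'_2$ (not from $D'_1$), and it is the intersection with $\partial D'_1$ that drops; the roles of $D'_1$ and $D'_2$ in your conclusions are swapped (harmless, by symmetry, but as written the conclusions do not follow from the stated setup). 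Second, ``the sum is odd'' alone permits a splitting $3+0$; you need the additional (easy) observation that each surgered disk meets $\partial D$ at most $1+|\beta\cap\partial D|\leq 2$ times before you may conclude that one of them meets $\partial D$ in exactly one point.

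The genuine gap is in (d). Your twisting curve $m$ is a meridian of the solid torus $W_{D'}$, and so is $\partial E$ for every disk $E$ you are trying to multiply. For the natural choices of $m$ the disk $M$ bounded by $m$ is disjoint from $E$ in $W$: for instance, the generic meridian of $W_{D'}$ that avoids the scars and $a$ is the boundary of a band sum of $E$ and $D'$, which is disjoint from $E$ by construction. Then $i_{\Sigma}(m,\partial E)=0$, so $\tau_m^n(E)$ is isotopic to $E$ in $W$ for every $n$, and your ``infinite family'' is a single isotopy class. The assertion that the twisted disks are pairwise non-isotopic in $W$ requires $i_\Sigma(m,\partial E)>0$, which is neither stated nor verified and fails for the simplest admissible $m$. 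A correct choice of twisting object is the \emph{separating} disk $G=S\cap W$, where $S$ is the Haken sphere of the dual pair $(D,D')$: its boundary $\gamma=\partial N(\partial D\cup\partial D')$ is disjoint from $\partial D$ and $\partial D'$, so the disk twist along $G$ preserves both $D'$ and the set of dual disks of $D$ disjoint from $D'$; and every dual disk $E$ of $D$ with $E\neq D'$ satisfies $i_\Sigma(\gamma,\partial E)\geq 2$ (a curve disjoint from $\gamma$ on the $\partial D$-side meeting $\partial D$ once and $\partial D'$ not at all is isotopic to $\partial D'$), so the orbit $\tau_G^n(E)$ is infinite. With that replacement, (d) goes through.
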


It is easy to see that $\mathcal P_D(W)$ is $1$-dimensional since there is no primitive triple in $W$ whose vertices are represented by dual disks of $D$.
The contractibility of $\mathcal P_D(W)$ is proved using Theorem 2.1 in \cite{CK15} by a similar but simpler argument to the contractibility of $\mathcal P(V)$ for $L(p, 1)$. That is, when two dual disks $D'$ and $E'$ of $D$ intersect each other, one of the two disks from surgery on
$D'$ along an outermost subdisk of $D''$ cut off by $D' \cap D''$ is again a dual disk of $D$.

\section{Proof of the main theorem}
\label{sec:Proof of the main theorem}

In this section, we prove Theorem \ref{thm:main theorem}.
Let $(V, W; \Sigma)$ be the genus-$2$ Heegaard splitting of a lens space $L(p, q)$ with $1 \leq q \leq p/2$, and let $\mathcal S(V, W; \Sigma)$ be the sphere complex for $(V, W; \Sigma)$.

\bigskip

\noindent (1) $\mathcal S(V, W; \Sigma)$ contains no $3$-cycle, that is, it is a $1$-dimensional complex. Each vertex of $\mathcal S(V, W; \Sigma)$ has infinite valency.

\medskip

It is clear
from Lemmas \ref{lem:reducing spheres and dual pairs} and
\ref{lem:complex_of_dual_disks} that each vertex of $\mathcal S(V, W; \Sigma)$ has infinite valency.
Suppose that there exists a $2$-simplex in $\mathcal S(V, W; \Sigma)$.
Let $S$, $T$ and $R$ be the three vertices of the $2$-simplex.
We may assume that the dual pairs corresponding to $S$ and $T$ are $(D, D')$ and $(E, D')$, respectively.
Then the only possible dual pair corresponding to $R$ is $(F, D')$ for some dual disk $F$ of $D'$.
Then we have the primitive triple $\{D, E, F\}$ of dual disks of $D'$, which contradicts the fact that $\mathcal P_{D'}(V)$ is a tree in Lemma \ref{lem:complex_of_dual_disks}.

\bigskip

\noindent (2) $\mathcal S(V, W; \Sigma)$ is connected if and only if $L$ is $L(p, 1)$.

\medskip

Assume that $L$ is $L(p, 1)$, and let $S$ and $T$ be any two distinct vertices of $\mathcal S(V, W; \Sigma)$.
We may assume that the Haken spheres $S$ and $T$ correspond to the dual pairs $(D, D')$ and $(E, E')$, where $D$ and $E$ are primitive disks in $V$, and $D'$ and $E'$ are their dual disks respectively.
By Lemma \ref{lem:primitive_disk_complex} (1) and (2), the complex $\mathcal P'(V)$ is connected, and hence there exists a sequence $D = D_0, D_1, D_2, \cdots, D_n = E$ of primitive disks in $V$ such that each of $\{D_{i-1}, D_i\}$ is a primitive pair of $V$ having a common dual disk $D'_i$ for each $i \in \{1, 2, \cdots, n\}$.
We set $D_0' = D'$ and $D_{n+1}' = E'$.

For each $i \in \{ 0, 1, \ldots, n \}$, two dual disks $D'_i$ and $D'_{i+1}$ of $D_i$ might intersect each other, and so $\{D'_i, D'_{i+1}\}$ may not be a primitive pair.
But, since $\mathcal P_{D_i}(W)$ is a tree by Lemma \ref{lem:complex_of_dual_disks},
there exists a sequence of dual disks
$D_i' = D'_{i,0}, D'_{i,1}, \cdots, D'_{i, n_i}, D'_{i, n_i+1} = D_{i+1}'$ of $D_i$
such that each of $\{D'_{i,j}, D'_{i,j+1}\}$ is a primitive pair of $W$ for each $j \in \{0, 1, \cdots, n_i\}$.
Thus, by Lemma \ref{lem:reducing spheres and dual pairs}, we obtain a sequence of dual pairs
$(D_i, D_{i}') = (D_i, D_{i,0}'), (D_i, D_{i,1}'), \ldots,
(D_i, D_{i,n_i+1}') = (D_i, D_{i+1}')$,
which realizes a path in $\mathcal S(V, W; \Sigma)$ from the vertex of $(D_i, D_{i}')$ to of $(D_i, D_{i+1}')$ for each $i \in \{ 0, 1, \ldots, n \}$.
Moreover, the two vertices corresponding to the dual pairs $(D_i, D'_{i+1})$ and $(D_{i+1}, D'_{i+1})$ are joined by an edge in $\mathcal S(V, W; \Sigma)$ for each  $i \in \{ 0, 1, \ldots, n \}$.
Consequently, we obtain a sequence of dual pairs from $(D, D')$ to $(E, E')$ which realizes a path from $S$ to $T$ in $\mathcal S(V, W; \Sigma)$.

Conversely, assume that $L$ is not $L(p, 1)$.
Then, by Lemma \ref{lem:common_duals}, there exists a primitive pair $\{D, E\}$ of $V$ which has no common dual disk.
Choose any dual disks $D'$ and $E'$ of $D$ and $E$, and let $S$ and $T$ be the vertices of $\mathcal S(V, W; \Sigma)$ corresponding to the dual pairs $(D, D')$ and $(E, E')$ respectively.
We will show that there is no path joining $S$ and $T$.
Suppose, for contradiction, that there exists a sequence of dual pairs $(D, D') = (D_0, D'_0), (D_1, D'_1), \cdots, (D_n, D'_n) = (E, E')$ realizing a path $\mathcal L$ from $S$ to $T$ in $\mathcal S(V, W; \Sigma)$.

Define the simplicial map $\Phi_V: \mathcal S(V, W; \Sigma) \rightarrow \mathcal P'(V)$ sending the vertex of each dual pair to the vertex of the primitive disk in $V$ in the dual pair.
For example, $\Phi_V(S) = D$, $\Phi_V(T) = E$.
Then $\Phi_V(\mathcal L)$ is a path in $\mathcal P'(V)$ from $D$ to $E$.
Let $e$ be the edge in $\mathcal P(V)$ whose end vertices are $D$ and $E$.
Since $\{D, E\}$ has no common dual disk, the edge $e$ is not contained in the path $\Phi_V(\mathcal L)$.
Thus, the path $\Phi_V(\mathcal L)$ together with the edge $e$ in $\mathcal P(V)$
is itself a cycle or contains at least one cycle containing the edge $e$, which we denote by $\mathcal C$.
The cycle $\mathcal{C}$ is an $n$-cycle for some $n \geq 3$.
Further, among the primitive pairs which determines the edges of $\mathcal C$, $\{D, E\}$ is the only one having no common dual disk.

Suppose first that $\mathcal C$ is a $3$-cycle (in this case, we have $q = 2$ or $p = 2q+1$ necessarily, by Lemma \ref{lem:primitive_triples}).
Then $C$ bounds a $2$-simplex in $\mathcal P(V)$.
Among the three primitive pairs in the primitive triple representing the vertices of the $2$-simplex, $\{D, E\}$ is the only one having no common dual disk, which contradicts Lemma \ref{lem:primitive_triples} (4).

Suppose that $\mathcal C$ is an $n$-cycle with $n \geq 4$ (in this case, we have $(p, q) = (5, 2)$ necessarily by Lemma \ref{lem:primitive_triples}).
Then, as mentioned in the remark in Section \ref{sec:primitive_disk_complex}, there exist two $2$-simplices in $\mathcal D(V)$ such that exactly two edges of each of them are contained in $\mathcal C$ and further they have no common edges contained in $\mathcal C$.
Thus one of them, say $\Delta$, does not contain the edge $e$.
Two edges of $\Delta$ are contained in the cycle $\mathcal C$, but the other one is not contained in $\mathcal C$ and not even in $\mathcal P'(V)$ since $\mathcal P'(V)$ consists of tree components by Lemma \ref{lem:primitive_disk_complex} (3).
Consequently, among the three primitive pairs of the primitive triple representing the vertices of $\Delta$, exactly one pair has no common dual disk, which contradicts Lemma \ref{lem:primitive_triples} (4) again.

\bigskip

\noindent (3) If $L$ is $L(2, 1)$, then every edge in $\mathcal S(V, W; \Sigma)$ is contained in a unique cycle, which is a $4$-cycle.

\medskip

Let $S$ and $T$ be the end vertices of an edge in $\mathcal S(V, W; \Sigma)$ whose corresponding dual pairs are $(D, E')$ and $(E, E')$ respectively.
By Lemma \ref{lem:common_duals}, there exists exactly one more common dual disk $D'$ of the pair $\{D, E\}$, and $\{D', E'\}$ is a primitive pair in $W$.
Thus we have a $4$-cycle containing the edge joining $S$ and $T$ whose vertices correspond to $(D, E')$, $(E, E')$, $(E, D')$ and $(D, D')$ (see Figure \ref{fig:cycles} (a)).

\medskip

\begin{center}
\begin{overpic}[width=9cm, clip]{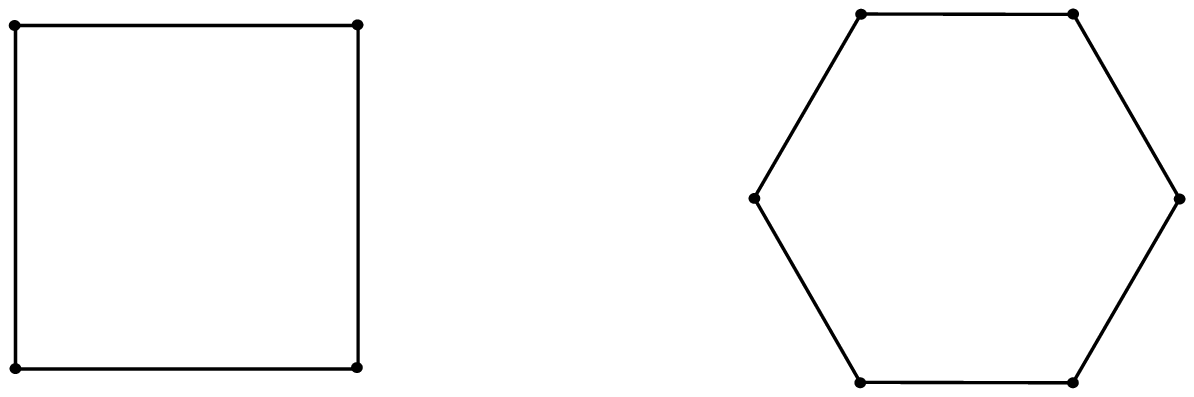}
  \linethickness{3pt}
    \put(0,90){$(D, E')$}
    \put(72,90){$(E, E')$}
    \put(72,5){$(E,D')$}
    \put(0,5){$(D, D')$}
    \put(45,-5){(a)}
    \put(155,90){$(D, E')$}
    \put(215,90){$(E, E')$}
    \put(245,47){$(E,F')$}
    \put(215,3){$(F,F')$}
    \put(155,3){$(F,D')$}
    \put(124,47){$(D,D')$}
    \put(197,-5){(b)}
\end{overpic}
\captionof{figure}{}
\label{fig:cycles}
\end{center}

Now suppose that there exists a cycle $\mathcal Z$ in $\mathcal S(V, W; \Sigma)$ which shares edge(s) with the $4$-cycle.
Suppose first that $\mathcal Z$ shares a single edge with the $4$-cycle.
We may assume that the end vertices of the edge are $S$ and $T$.
Then we write all the dual pairs of the vertices of $\mathcal Z$ consecutively, $(D, E') = (D_0, E'_0), (D_1, E'_1), \cdots, (D_n, E'_n) = (E, E')$.
For each $j \in \{0, 1, \cdots, n\}$, the pair $\{D_{j-1}, D_j\}$ cannot be $\{D, E\}$ since the only common dual disks of $\{D, E\}$ are $D'$ and $E'$.
Thus the image $\Phi_V(\mathcal Z)$ contains at least a cycle in $\mathcal P'(V)$, which contradicts Lemma \ref{lem:primitive_disk_complex} (1).
In case that $\mathcal Z$ shares more edges with the $4$-cycle, a similar argument holds to have the same contradiction.

\bigskip

\noindent (4) If $L$ is $L(3, 1)$, then every edge in $\mathcal S(V, W; \Sigma)$ is contained in a unique cycle, which is a $6$-cycle.

\medskip

Let $S$ and $T$ be the end vertices of the edge in $\mathcal S(V, W; \Sigma)$ whose corresponding dual pairs are $(D, E')$ and $(E, E')$ respectively.
By Lemma \ref{lem:common_duals} and \ref{lem:primitive_triples} (1) and (4), there exists a unique dual pair $(F, F')$ such that $\{D, E, F\}$ and $\{D', E', F'\}$ are primitive triples in $V$ and $W$ respectively, and $E'$, $F'$ and $D'$ are common dual disks of the pairs $\{D, E\}$, $\{E, F\}$ and $\{F, G\}$ respectively.
Thus the edge joining $S$ and $T$ is contained in the $6$-cycle, whose vertices correspond to $(D, E')$, $(E, E')$, $(E, F')$ $(F, F')$, $(F, D')$ and $(D, D')$ as in Figure \ref{fig:cycles} (b).
Further, by a similar argument to the case of $L(2, 1)$ in the above, we see that there exists no cycle which shares edge(s) with the $6$-cycle, using Lemma \ref{lem:common_duals} and Lemma \ref{lem:primitive_disk_complex} (2).

\bigskip

\noindent (5) and (6) If $L$ is $L(p, 1)$ with $p \geq 4$, then $\mathcal S(V, W; \Sigma)$ is a tree.
If $L$ is not $L(p, 1)$, then $\mathcal S(V, W; \Sigma)$ consists of infinitely many tree components.

\medskip
Since $\mathcal S(V, W; \Sigma)$ is $1$-dimensional, it suffices to show that there is no cycle in $\mathcal S(V, W; \Sigma)$.
Suppose there exists a cycle $\mathcal Z$ in $\mathcal S(V, W; \Sigma)$, and let $S$ and $T$ be the end vertices of an edge of $\mathcal Z$.
We may assume that $S$ and $T$ correspond to the dual pairs $(D, E')$ and $(E, E')$ respectively, and then write all the vertices of $\mathcal Z$ consecutively, $(D, E') = (D_0, E'_0), (D_1, E'_1), \cdots, (D_n, E'_n) = (E, E')$ as in the argument for $L(2, 1)$ in the above.
Then, for each $j \in \{0, 1, \cdots, n\}$, the pair $\{D_{j-1}, D_j\}$ cannot be $\{D, E\}$ since the only common dual disk of $\{D, E\}$ is $E'$.
Thus the image $\Phi_V(\mathcal Z)$ contains at least a cycle in $\mathcal P'(V)$, which contradicts Lemma \ref{lem:primitive_disk_complex} (1) and (3).
When $L$ is not $L(p, 1)$, $\mathcal S(V, W; \Sigma)$ consists of infinitely many tree components since any primitive disk is contained in infinitely many primitive pairs having no common dual disks.

\end{document}